\def\NZQ{\Bbb}               
\def\NN{{\NZQ N}}
\def\FF{{\NZQ F}}
\def\GG{{\NZQ G}}
\def\frk{\frak}               
\def\Phi{{\frk n}}
\def\Phi{{\frk N}}
\def\bb{{\bold b}}
\def\xb{{\bold x}}
\def\opn#1#2{\def#1{\operatorname{#2}}} 
\opn\chara{char} \opn\length{\ell} \opn\pd{pd} \opn\rk{rk}
\opn\projdim{proj\,dim} \opn\injdim{inj\,dim} \opn\rank{rank}
\opn\depth{depth} \opn\grade{grade} \opn\height{height}
\opn\embdim{emb\,dim} \opn\codim{codim}
\opn\Tr{Tr} \opn\bigrank{big\,rank}
\opn\superheight{superheight}\opn\lcm{lcm}
\opn\trdeg{tr\,deg}
\opn\reg{reg} \opn\lreg{lreg} \opn\ini{in} \opn\lpd{lpd}
\opn\size{size}\opn\bigsize{bigsize}
\opn\cosize{cosize}\opn\bigcosize{bigcosize}
\opn\sdepth{sdepth}\opn\sreg{sreg}
\opn\link{link}\opn\fdepth{fdepth}
\opn\div{div} \opn\Div{Div} \opn\cl{cl} \opn\Cl{Cl}
\opn\Spec{Spec} \opn\Supp{Supp} \opn\supp{supp} \opn\Sing{Sing}
\opn\Ass{Ass} \opn\Min{Min}\opn\Mon{Mon} \opn\dstab{dstab} \opn\astab{astab}
\opn\Ann{Ann} \opn\Rad{Rad} \opn\Soc{Soc}
\opn\Im{Im} \opn\Ker{Ker} \opn\Coker{Coker} \opn\Am{Am}
\opn\Hom{Hom} \opn\Tor{Tor} \opn\Ext{Ext} \opn\End{End}
\opn\Aut{Aut} \opn\id{id}
\opn\nat{nat}
\opn\pff{pf}
\opn\Pf{Pf} \opn\GL{GL} \opn\SL{SL} \opn\mod{mod} \opn\ord{ord}
\opn\Gin{Gin} \opn\Hilb{Hilb}\opn\sort{sort}
\opn\aff{aff} \opn\con{conv} \opn\relint{relint} \opn\st{st}
\opn\lk{lk} \opn\cn{cn} \opn\core{core} \opn\vol{vol}
\opn\link{link} \opn\star{star}\opn\lex{lex}
\opn\gr{gr}
\def\pot#1#2{#1[\kern-0.28ex[#2]\kern-0.28ex]}
\opn\dirlim{\underrightarrow{\lim}}
\opn\inivlim{\underleftarrow{\lim}}
\let\Dirsum=\bigoplus
\def\Implies{\ifmmode\Longrightarrow \else
        \unskip${}\Longrightarrow{}$\ignorespaces\fi}
\def\implies{\ifmmode\Rightarrow \else
        \unskip${}\Rightarrow{}$\ignorespaces\fi}
\def\iff{\ifmmode\Longleftrightarrow \else
        \unskip${}\Longleftrightarrow{}$\ignorespaces\fi}
\newtheorem{Theorem}{Theorem}
\newtheorem{Corollary}[Theorem]{Corollary}
\newtheorem{Proposition}[Theorem]{Proposition}
\newtheorem{Remark}[Theorem]{Remark}
\newtheorem{Example}[Theorem]{Example}
\let\epsilon\varepsilon
\let\kappa=\varkappa
\def\qed{\ifhmode\textqed\fi
      \ifmmode\ifinner\quad\qedsymbol\else\dispqed\fi\fi}
\def\textqed{\unskip\nobreak\penalty50
       \hskip2em\hbox{}\nobreak\hfil\qedsymbol
       \parfillskip=0pt \finalhyphendemerits=0}
\def\dispqed{\rlap{\qquad\qedsymbol}}
\opn\dis{dis}
\def\pnt{{\raise0.5mm\hbox{\large\bf.}}}
\opn\Lex{Lex}
\begin{document}

\title  {A note on the subadditivity problem for maximal shifts in free resolutions}
\author{Juergen Herzog and Hema Srinivasan}

\address{J\"urgen Herzog, Fachbereich Mathematik, Universit\"at Duisburg-Essen, Campus Essen, 45117
Essen, Germany}
\email{juergen.herzog@uni-essen.de}

\address{Hema Srinivasan, Mathematics Department, University of
Missouri, Columbia, MO 65211, USA}
\email{SrinivasanH@math.missouri.edu}

\subjclass[2000]{13A02, 13D02}
\keywords{Graded free resolutions, monomial ideals}

 \begin{abstract}
We present some partial results regarding subadditivity of maximal shifts in  finite graded free resolutions.
 \end{abstract}

\thanks{The paper was written while the authors were visiting MSRI at Berkeley. They wish to acknowledge the support, the hospitality and the inspiring atmosphere of this institution.}

\maketitle

Let $K$ be field, $S=K[x_1,\ldots,x_n]$ the polynomial ring over $K$ in the indeterminates $x_1,\ldots,x_n$ and $I\subset S$ a graded ideal. Let $(\FF, \partial)$ be a graded free $S$-resolution of $R=S/I$.   Each free module $\FF_a$ in the resolution  is of the form $\FF_a=\Dirsum_jS(-j)^{b_{aj}}$. We set $$t_a(\FF)= \max\{j\:\; b_{aj}\neq 0\}.$$ In the case that $\FF$ is the graded minimal free resolution of $I$ we  write $t_a(I)$ instead of $t_a(\FF)$.

We say $\FF$ satisfies the {\em subadditivity condition}, if $t_{a+b}(\FF)\leq t_a(\FF)+t_b(\FF)$.

\begin{Remark}{\em  The Taylor complex and the Scarf complex satisfy the subadditivity condition. Indeed, both complexes are cellular resolutions supported on a simplicial complex.  From this fact the  assertion follows immediately.}
\end{Remark}

The minimal resolution of a  graded algebra $S/I$ does not always satisfy the subadditivity condition as pointed out in \cite{ACI}. Additional assumptions on the ideal $I$ are required. Somewhat weaker inequalities can be shown  in certain ranges of $a$ and $b$, and in particular the inequality $t_{a+1}(I)\leq t_a(I)+t_1(I)$ if $R=S/I$ is Koszul and $a\leq \height I$, see \cite[Theorem 4.1]{ACI}. Another case of interest for which the subadditivity condition holds is when $\dim S/I\leq 1$ and $a+b=n$ as shown in \cite[Theorem 4.1]{EHU} by David Eisenbud, Craig Huneke and Bernd Ulrich. No counterexample is known for monomial ideals,

\medskip
For a general graded ideal $I$ we have the following result.

\begin{Proposition}
\label{free}
Let $I\subset S$ be a graded ideal, $\FF$ the graded minimal free resolution of $S/I$. Suppose there exists a homogeneous basis $f_1,\ldots,f_r$ of $F_a$  such  that $$\partial(\FF_{a+1})\subset  \Dirsum_{i=1}^{r-1}Sf_i.$$ Then $\deg f_r\leq t_{a-1}+t_1$.
\end{Proposition}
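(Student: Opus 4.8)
\emph{Overview of the plan.} The idea is to isolate from the hypothesis one structural fact about the element $g:=\partial(f_r)\in\FF_{a-1}$, namely that it meets the submodule $\partial(G)$ trivially, where $G:=\Dirsum_{i=1}^{r-1}Sf_i$, and then to play this off against a null-homotopy of multiplication by a generator of $I$. The generator supplies the $t_1$ in the bound, while the basis of $\FF_{a-1}$ supplies the $t_{a-1}$. Throughout I write $d:=\deg f_r$, and I use exactness of $\FF$ at $\FF_a$ (for $a\geq 1$), i.e.\ $\ker(\partial\:\FF_a\to\FF_{a-1})=\partial(\FF_{a+1})$.

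\emph{Step 1: converting the hypothesis.} The assumption $\partial(\FF_{a+1})\subset G$ says precisely that $\ker(\partial|_{\FF_a})\subset G$. In particular $f_r\notin\ker\partial$, so $g\neq 0$. I claim $Sg\sect\partial(G)=0$ in $\FF_{a-1}$: if $sg=\partial(g_0)$ with $g_0\in G$, then $\partial(sf_r-g_0)=sg-sg=0$, so $sf_r-g_0\in\ker\partial\subset G$, whence $sf_r\in G$ and therefore $s=0$. (Equivalently, $Sg\iso S(-d)$ is a free direct summand of the syzygy module $\partial(\FF_a)$.) This is the only step in which the hypothesis is used, and it is the conceptual core of the argument.

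\emph{Step 2: testing against a generator of $I$.} Choose a nonzero $u\in I$ with $\deg u\leq t_1$, e.g.\ a minimal generator. Since $u$ annihilates $R=S/I$, multiplication by $u$ on $\FF$ is null-homotopic; fix a graded homotopy $h$ of internal degree $\deg u$, so that $u\cdot\id=\partial h+h\partial$ and $h(\FF_{a-1})\subset\FF_a$. Applying this to $g$ and using $\partial(g)=\partial^2(f_r)=0$ gives $ug=\partial(w)$, where $w:=h(g)\in\FF_a$ is homogeneous of degree $d+\deg u$. Writing $w=\sum_i s_if_i$ and using $\partial(f_r)=g$, I get $(u-s_r)g=\sum_{i<r}s_i\partial(f_i)\in\partial(G)$; by Step 1 this forces $(u-s_r)g=0$, hence $s_r=u\neq 0$ since $g$ is a nonzero element of a free module.

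\emph{Step 3: reading off degrees.} Let $\pi\:\FF_a\to Sf_r$ be the projection onto the $f_r$-summand. Then $\pi\circ h\:\FF_{a-1}\to Sf_r$ is a graded map of internal degree $\deg u$, and it is nonzero because $(\pi\circ h)(g)=\pi(w)=s_rf_r=uf_r\neq 0$. Choosing a homogeneous basis $e_1,\dots,e_m$ of $\FF_{a-1}$ (so $\deg e_k\leq t_{a-1}$), nonvanishing of $\pi\circ h$ means that for some $k$ its restriction $Se_k\to Sf_r$ is a nonzero homogeneous map of degree $\deg u$, i.e.\ multiplication by a polynomial of degree $\deg e_k+\deg u-d\geq 0$. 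Hence $\deg f_r=d\leq\deg e_k+\deg u\leq t_{a-1}+t_1$, as required. I expect the main obstacle to be Step 1: one must recognize that the hypothesis should be repackaged as $Sg\sect\partial(G)=0$, and then see that this is exactly what prevents the coordinate $s_r$ from vanishing in Step 2. Once one thinks to probe the resolution with multiplication by a generator of $I$, the remaining degree bookkeeping is routine; the delicate point is that without the hypothesis $s_r$ could be zero and the estimate would break down.
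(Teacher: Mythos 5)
Your proof is correct, and it reaches the bound by a route that is the primal mirror of the paper's dual argument. The paper passes to $\FF^*=\Hom_S(\FF,S)$: the hypothesis says precisely that $\partial^*(f_r^*)=0$, so $f_r^*$ represents a class in $H^a(\FF^*)=\Ext^a_S(S/I,S)$, which is an $S/I$-module; hence for a generator $c\in I$ of degree $t_1$ one can write $cf_r^*=\sum_i s_i\,\partial^*(g_i^*)$ with $\partial^*(g_i^*)=c_if_r^*+m_i$, where $\partial(f_r)=\sum_i c_ig_i$, and comparing the coefficient of $f_r^*$ gives $c=\sum_i s_ic_i$, so $\deg c_i\leq t_1$ for some $i$ with $c_i\neq 0$, whence $\deg f_r=\deg c_i+\deg g_i\leq t_1+t_{a-1}$. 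You never dualize: your Step 1 repackages the hypothesis plus exactness at $\FF_a$ into the splitting $Sg\sect\partial(G)=0$, which plays the role that the cocycle condition and the identification $H^a(\FF^*)=\Ext^a_S(S/I,S)$ play in the paper; your null-homotopy $u\cdot\id=\partial h+h\partial$ is the standard proof of the very fact the paper quotes (that $I$ annihilates $\Ext^a_S(S/I,S)$); and your extraction $s_r=u$ is the paper's coefficient comparison $c=\sum_i s_ic_i$ seen on the other side of duality. What your version buys: no dual bases or cohomology, an explicit record of where exactness enters (only at $\FF_a$), and the visible fact that minimality of $\FF$ is never used except to read $t_{a-1}$ as $t_{a-1}(I)$, so the same argument bounds $\deg f_r$ by $t_{a-1}(\FF)+t_1(I)$ for an arbitrary graded free resolution satisfying the hypothesis. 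What the paper's version buys: brevity, since the $S/I$-module structure on $\Ext^a$ delivers in one line what your Steps 1 and 2 assemble by hand. One observation latent in both proofs: nothing forces $u$ (resp.\ $c$) to have maximal degree among the generators, so in fact $\deg f_r\leq t_{a-1}+\deg u$ for every nonzero homogeneous $u\in I$, giving the sharper bound with the initial degree of $I$ in place of $t_1$.
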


\begin{proof} We denote by $(\FF^*,\partial^*)$  the complex $\Hom_S(\FF,S)$ which is  dual to $\FF$.  For any basis $h_1,\ldots,h_l$  of $\FF_b$ we denote by $h_i^*$ the basis element of $\FF_b^*$ with $h_i^*(h_j)=1$ if  $j=i$ and  $h_i^*(h_j)=0$, otherwise. Then $h_1^*,\ldots,h_l^*$ is a basis of $\FF_b^*$, the so-called dual basis of $h_1,\ldots,h_l$.

Our assumption implies that $\partial^*(f_r^*)=0$. This implies that $f_r^*$ is a generator of $H^a(\FF^*)=\Ext^a_S(S/I,S)$, and hence $If_r^*=0$  in $H^a(\FF^*)$, since $\Ext^a(S/I,S)$ is an $S/I$-module.  On the other hand, if $g_1,\ldots, g_m$ is a basis of $\FF_{a-1}$ and $\partial(f_r)=c_1g_1+\cdots +c_ng_m$, then $\partial^*(g_i^*)=c_if^*+m_i$ where each $m_i$ is a linear combination of the remaining basis elements of $\FF_a^*$. Let $c\in I$ be a generator of maximal degree. Then by definition, $\deg c=t_1(I)$. Since $If_r^*=0$ in $H^a(\FF^*)$, there exist homogeneous elements $s_i\in S$ such that $cf_r^*=\sum_{i=1}^ms_i(c_if_r^*+m_i)$. This is only possible if $t_1(I)=\deg c_i +\deg s_i$ for some $i$. In particular, $\deg c_i\leq t_1(I)$. It follows that $\deg f_r=\deg c_i+\deg g_i\leq t_1(I)+t_{a-1}(I)$, as desired.
\end{proof}

In \cite[Theorem 4.4]{McC} Jason  McCullough shows that $t_p(I)\leq \max_a\{t_a(I)+t_{p-a}(I)\}$ where $p=\projdim S/I$. As an immediate consequence of Proposition~\ref{free} we obtain the following improvement of  McCullough's inequality:

\begin{Corollary}
\label{good}
Let $I\subset S$ be a graded ideal of projective dimension $p$. Then $$t_p(I)\leq t_{p-1}(I)+t_1(I).$$
\end{Corollary}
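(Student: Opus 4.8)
The plan is to apply Proposition~\ref{free} directly at the top of the minimal free resolution, namely with $a=p$. The single key observation is that since $p=\projdim S/I$, the minimal resolution $\FF$ terminates at homological degree $p$, so $\FF_{p+1}=0$ and therefore $\partial(\FF_{p+1})=0$. This means the containment hypothesis of Proposition~\ref{free} is satisfied for free: for \emph{any} homogeneous basis $f_1,\ldots,f_r$ of $\FF_p$ we trivially have
$$\partial(\FF_{p+1})=0\subset \Dirsum_{i=1}^{r-1}Sf_i.$$
Thus there is no genuine constraint to arrange, unlike in the general situation of the proposition where one must first find a basis with this property.

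Next I would exploit the freedom in choosing the basis. Among all homogeneous basis elements of $\FF_p$, I would order them so that $f_r$ is one of maximal degree. By the very definition of $t_p(I)=\max\{j : b_{pj}\neq 0\}$, the integer $t_p(I)$ is precisely the largest degree occurring among the generators of $\FF_p$, so with this choice $\deg f_r = t_p(I)$.

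Finally, I would invoke the conclusion of Proposition~\ref{free}, which for $a=p$ yields $\deg f_r \leq t_{p-1}(I)+t_1(I)$. Combining this with $\deg f_r=t_p(I)$ gives
$$t_p(I)\leq t_{p-1}(I)+t_1(I),$$
which is the desired inequality.

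Honestly, there is no real obstacle here: the entire content lies in Proposition~\ref{free}, and the corollary is just the boundary case in which $\partial(\FF_{a+1})$ vanishes identically, making the hypothesis automatic. The only point requiring a moment's care is matching the purely homological quantity $t_p(I)$ with $\deg f_r$; this is immediate once one recalls that $t_p(I)$ records the top degree among the free generators of $\FF_p$ and that we are free to place a generator of that maximal degree in the last slot $f_r$.
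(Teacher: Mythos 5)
Your proof is correct and is exactly the argument the paper intends: since $p=\projdim S/I$ forces $\FF_{p+1}=0$, the containment hypothesis of Proposition~\ref{free} is vacuously satisfied for any homogeneous basis of $\FF_p$, so placing a generator of maximal degree $t_p(I)$ in the last slot and invoking the proposition with $a=p$ yields $t_p(I)\leq t_{p-1}(I)+t_1(I)$. The paper gives no separate proof, stating the corollary as an immediate consequence of Proposition~\ref{free}, and your write-up supplies precisely that routine verification.
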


For monomial ideals one even has

\begin{Corollary}
\label{monomial}
Let $I$ be a monomial ideal. Then  $t_a(I)\leq t_{a-1}(I)+t_1(I)$ for all $a\geq 1$.
\end{Corollary}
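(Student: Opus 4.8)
The plan is to use the $\ZZ^n$-graded (multigraded) structure of the minimal free resolution $\FF$ of $S/I$ and to engineer a situation in which Proposition~\ref{free} applies. I would write each basis element of $\FF_a$ together with its multidegree, choose $\ab\in\ZZ^n$ with $\beta_{a,\ab}(S/I)=\dim_K\Tor^S_a(S/I,K)_{\ab}\neq 0$ and total degree $|\ab|=t_a(I)$, and let $f$ be a corresponding basis element, so $\deg f=t_a(I)$. The naive hope is that $f$ has a zero row in $\partial_{a+1}$, for then Proposition~\ref{free} gives the result at once. This hope is false in general: a basis element of $\FF_{a+1}$ of multidegree $\bb$ with $\bb\geq\ab$ (componentwise) contributes the monomial $\xb^{\bb-\ab}$ to that row, and such $\bb$ do occur---already for $I=(x_1,\dots,x_n)^2$ every top-degree generator of $\FF_a$ is dominated in this way. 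Overcoming this is the crux.

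The idea to remove the offending higher syzygies while preserving $\beta_{a,\ab}$ is to pass to the smaller monomial ideal
$$J=\bigl(u\in G(I): u\mid \xb^{\ab}\bigr),$$
generated by those minimal generators of $I$ that divide $\xb^{\ab}$. Here I would invoke the standard locality of multigraded Betti numbers: $\beta_{i,\bb}(S/I)$ depends only on the generators of $I$ dividing $\xb^{\bb}$. Since for $\bb\leq\ab$ these are exactly the generators of $J$ dividing $\xb^{\bb}$, one gets $\beta_{i,\bb}(S/J)=\beta_{i,\bb}(S/I)$ for all $\bb\leq\ab$; in particular $\beta_{a,\ab}(S/J)\neq 0$, so the $a$-th module in the minimal resolution of $S/J$ again has a basis element $f$ of multidegree $\ab$ and degree $t_a(I)$.

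Now comes the payoff. The multigraded Betti numbers of a monomial ideal are supported on its lcm lattice, and every lcm of generators of $J$ divides $\xb^{\ab}$; hence every multidegree occurring in the resolution of $S/J$ is $\leq\ab$, so $\ab$ is its unique maximal multidegree. Therefore in $\partial_{a+1}$ (for $S/J$) a contribution to the row of $f$ could only come from a basis element of multidegree $\geq\ab$, that is, of multidegree exactly $\ab$; but such an entry is homogeneous of degree $0$ and lies in $\mm$ by minimality, hence vanishes. Thus $f$ has zero row, the hypothesis of Proposition~\ref{free} is met for $J$, and we obtain $t_a(I)=\deg f\leq t_{a-1}(J)+t_1(J)$. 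Finally, since $\beta_{i,\bb}(S/J)=\beta_{i,\bb}(S/I)$ for $\bb\leq\ab$ and all multidegrees of $S/J$ are $\leq\ab$, we have $t_{a-1}(J)\leq t_{a-1}(I)$, and $t_1(J)\leq t_1(I)$ because the generators of $J$ form a subset of those of $I$; this yields $t_a(I)\leq t_{a-1}(I)+t_1(I)$.

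The step I expect to need the most care is exactly the reduction to $J$: one must verify that truncating $I$ to $J$ both preserves the Betti number at $\ab$ and discards only multidegrees lying strictly above $\ab$. Both points rest on the locality of multigraded Betti numbers together with the lcm-lattice description of their support; once these are in place, minimality does the rest by killing the degree-zero entries. The essential conceptual point is that Proposition~\ref{free} cannot be applied to $I$ directly---the maximal-degree syzygy may genuinely lie in the image of $\partial_{a+1}$---so the passage to $J$, which makes $\ab$ a globally maximal multidegree, is what makes the argument go through.
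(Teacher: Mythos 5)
Your proof is correct and is essentially the paper's own argument: your ideal $J$ is exactly the restricted ideal $I^{\leq\ab}$, and your appeal to locality of multigraded Betti numbers plus the lcm-lattice bound on the support re-derives precisely the content of the restriction lemma \cite[Lemma 4.4]{HH} that the paper invokes. The remaining steps---the vanishing row of the top-multidegree basis element forced by minimality, the application of Proposition~\ref{free} to $J$, and the comparisons $t_{a-1}(J)\leq t_{a-1}(I)$ and $t_1(J)\leq t_1(I)$---match the paper's proof of Corollary~\ref{monomial} step for step.
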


For the proof of this  and the following results we will use the restriction lemma as given in \cite[Lemma 4.4]{HH}: let $I$ be a monomial ideal with multigraded (minimal) free resolution $\FF$ and let $\alpha\in \NN^n$. Then the restricted complex $\FF^{\leq \alpha}$ which is the subcomplex of $\FF$ for which $(\FF^{\leq\alpha})_i$ is spanned by those basis elements of $\FF_i$ whose multidegree is componentwise less than or equal to $\alpha$, is a (minimal) multigraded free resolution
of the monomial ideal $I^{\leq\alpha}$ which is generated by all monomials $\xb^\bb\in I$ with $\bb\leq \alpha$, componentwise.

\begin{proof}[Proof of Corollary~\ref{monomial}] Let $\FF$ the minimal multigraded free $S$-resolution of $S/I$, and let $f\in F_a$ be a homogeneous element of multidegree $\alpha\in \NN^n$ whose total degree is $t_a(I)$. We apply the restriction lemma  and consider the restricted complex $\FF^{\leq \alpha}$.   Let $f_1,\ldots,f_r$ be a homogenous basis of $(\FF^{\leq\alpha})_{a}$ with $f_r=f$. Since there is no basis element of $(\FF^{\leq\alpha})_{a+1}$ of a multidegree which is coefficient bigger than $\alpha$, and since the resolution $\FF^{\leq\alpha}$ is minimal,  it follows that $\partial((\FF^{\leq \alpha})_{a+1})\subset   \Dirsum_{i=1}^{r-1}Sf_i$. Thus we may apply Proposition~\ref{free} and deduce that $t_a(I^{\leq\alpha})\leq t_{a-1}(I^{\leq\alpha})+t_1(I^{\leq\alpha})$. Since $t_a(I)=t_a(I^{\leq\alpha})$,  $t_{a-1}(I^{\leq\alpha})\leq t_{a-1}(I)$  and  $t_{1}(I^{\leq\alpha})\leq t_{1}(I)$, the assertion follows.
\end{proof}

The preceding corollary generalizes \cite[Corollary 1.9]{Gim} of Fern\'andez-Ramos and Philippe Gimenez, who showed that $t_a\leq t_{a-1}+2$ for any monomial ideal generated in degree $2$.

\medskip
Let $I\subset S$ be a monomial ideal, and $\alpha,\beta\in\NN^n$ be two integer vectors. We say that $(\alpha,\beta)$ is a {\em covering pair} for $I$, if
\[
I=I^{\leq \alpha}+I^{\leq \beta}.
\]
\begin{Theorem}
\label{covering}
Let $(\alpha,\beta)$ be a covering pair for the monomial ideal $I$, and suppose that $p=\projdim S/I^{\leq \alpha}$ and $q=\projdim S/I^{\leq \beta}$. Then $\projdim S/I\leq p+q$, and  for all integers $a\leq \projdim S/I$ we have
\[
t_a(I)\leq \max\{t_i(I)+t_j(I)\:\, i+j=a,\; i\leq p,\; j\leq q\}.
\]
\end{Theorem}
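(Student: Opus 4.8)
The plan is to turn the covering hypothesis into homological algebra via the Mayer--Vietoris sequence of the decomposition $I=J+K$, where I write $J=I^{\le\alpha}$ and $K=I^{\le\beta}$. First I would record the short exact sequence
\[
0\to S/(J\cap K)\to S/J\oplus S/K\to S/I\to 0,
\]
with diagonal map $\bar x\mapsto(\bar x,\bar x)$ on the left and difference map $(\bar a,\bar b)\mapsto\bar a-\bar b$ on the right. Lifting the inclusion to a comparison map of the minimal free resolutions of $S/(J\cap K)$ and of $S/J\oplus S/K$ and forming the mapping cone yields a (generally non-minimal) free resolution $\FF'$ of $S/I$ with $\FF'_a=(G_a\oplus H_a)\oplus L_{a-1}$, where $G,H,L$ denote the minimal resolutions of $S/J$, $S/K$, $S/(J\cap K)$. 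Reading off lengths and shifts gives at once
\[
\projdim S/I\le\max\{\max(p,q),\ \projdim S/(J\cap K)+1\},\qquad t_a(I)\le\max\{t_a(J),\,t_a(K),\,t_{a-1}(J\cap K)\}.
\]
Since $J$ and $K$ are restrictions of $I$, the restriction lemma realizes $G$ and $H$ as subcomplexes of $\FF$, so that $t_i(J)\le t_i(I)$ and $t_j(K)\le t_j(I)$ for all $i,j$; in particular the two terms $t_a(J),t_a(K)$ are of the admissible shape $t_a(I)+t_0(I)$ and $t_0(I)+t_a(I)$, present exactly when $a\le p$ respectively $a\le q$.

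Both assertions are thereby reduced to a single statement about the intersection, which I would isolate as a lemma:
\[
\projdim S/(J\cap K)\le p+q-1,\qquad t_{a-1}(J\cap K)\le\max\{t_i(J)+t_j(K)\:\; i+j=a,\ i\le p,\ j\le q\}.
\]
Granting it, the mapping-cone inequalities combine with $t_i(J)\le t_i(I)$ and $t_j(K)\le t_j(I)$ to give both $\projdim S/I\le p+q$ and the desired bound on $t_a(I)$. For the projective-dimension half one can avoid the explicit cone and argue with depth instead: the depth lemma applied to the sequence above yields $\depth S/I\ge\min\{\depth(S/J\oplus S/K),\ \depth S/(J\cap K)-1\}$, and as $\depth(S/J\oplus S/K)=n-\max(p,q)$, the estimate $\projdim S/(J\cap K)\le p+q-1$ is precisely what forces $\depth S/I\ge n-(p+q)$.

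The hard part, and the only place where the covering hypothesis is genuinely needed, is the lemma, i.e. gaining control of $J\cap K$. The difficulty is structural: the Mayer--Vietoris sequence binds $S/I$ and $S/(J\cap K)$ together symmetrically, so no bound for the intersection can be extracted from the inequalities one is trying to prove without an independent input. I would break the circularity by induction on $p+q$, the base cases $p=0$ or $q=0$ being the degenerate situation where one of $J,K$ vanishes and $I$ equals the other. For the inductive step one must produce from $\alpha,\beta$ a covering pair for $J\cap K$ whose two projective dimensions sum to at most $p+q-1$: the minimal generators of $J\cap K$ are the least common multiples $\lcm(u,v)$ with $u\in G(J)$, $v\in G(K)$, all of exponent $\le\alpha\vee\beta$, and the point is to split them so that passing to the intersection strictly decreases the parameter $p+q$, after which the inductive hypothesis applies and, translated back through the restriction lemma, returns shifts bounded by $\max_{i+j=a}(t_i(J)+t_j(K))$. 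Constructing and validating this covering decomposition of the intersection is the main obstacle; once it is in hand the degree bookkeeping is routine, because total degrees add under the relevant maps and the restriction lemma only lowers shifts.

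Finally, for the degree statement it is convenient to reduce beforehand to the case $a=\projdim S/I$ by the usual restriction device: choosing a basis element of $\FF_a$ of multidegree $\gamma$ with $|\gamma|=t_a(I)$ and replacing $(I,\alpha,\beta)$ by $(I^{\le\gamma},\gamma\wedge\alpha,\gamma\wedge\beta)$ preserves $t_a$, can only decrease $p$ and $q$, and---via the projective-dimension bound applied to $I^{\le\gamma}$---guarantees that an admissible splitting $a=i+j$ with $i\le p$, $j\le q$ actually exists, so that the right-hand side of the asserted inequality is taken over a nonempty index set.
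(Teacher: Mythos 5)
Your Mayer--Vietoris reduction is sound as far as it goes: the cone over a comparison map $L\to G\oplus H$ of minimal resolutions lifted from $0\to S/(J\cap K)\to S/J\oplus S/K\to S/I\to 0$ does resolve $S/I$, and your bookkeeping correctly reduces both claims to the lemma you isolate, $\projdim S/(J\cap K)\le p+q-1$ and $t_{a-1}(J\cap K)\le\max\{t_i(J)+t_j(K)\: i+j=a,\ i\le p,\ j\le q\}$. But that lemma is never proved, and this is a genuine gap, not deferred routine work. The induction on $p+q$ you sketch has no working inductive step: you would need a covering pair $(\alpha',\beta')$ for $J\cap K$ with $\projdim S/(J\cap K)^{\le\alpha'}+\projdim S/(J\cap K)^{\le\beta'}\le p+q-1$, yet the generators of $J\cap K$ are the monomials $\lcm(u,v)$ with $u\in G(J)$, $v\in G(K)$, which satisfy $\lcm(u,v)\le\alpha\vee\beta$ but in general are neither $\le\alpha$ nor $\le\beta$; so $(\alpha,\beta)$ itself is not a covering pair for $J\cap K$, and no candidate pair with strictly smaller parameter is exhibited or obviously available. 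Even granting one, the inductive hypothesis would return bounds in terms of shifts of restrictions of $J\cap K$, not of the shape $t_i(J)+t_j(K)$, so a second unproved transfer step lurks in what you call routine bookkeeping. Indeed your lemma is essentially \emph{equivalent} to the theorem: running the same long exact sequence of Tor backwards, the theorem's bounds for $S/I$ yield exactly $\projdim S/(J\cap K)\le p+q-1$ and the shift bound for $J\cap K$. You correctly diagnose this circularity ("the main obstacle") but do not break it.

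The paper avoids the intersection entirely. It invokes the generalized Taylor complex of \cite{He}: from the restricted resolutions it forms $\GG=\FF^{\le\alpha}*\FF^{\le\beta}$, a multigraded free resolution of $S/(I^{\le\alpha}+I^{\le\beta})=S/I$ of length at most $p+q$, with $\GG_a=\Dirsum_{i+j=a}(\FF^{\le\alpha})_i*(\FF^{\le\beta})_j$ and basis elements $f*g$ of multidegree $\sigma\vee\tau$ when $f,g$ have multidegrees $\sigma,\tau$. Since the total degree of $\sigma\vee\tau$ is at most $|\sigma|+|\tau|$, both $\projdim S/I\le p+q$ and $t_a(I)\le t_a(\GG)\le\max\{t_i(\FF^{\le\alpha})+t_j(\FF^{\le\beta})\: i+j=a,\ i\le p,\ j\le q\}$ drop out at once, and the restriction lemma gives $t_i(I^{\le\alpha})\le t_i(I)$ and $t_j(I^{\le\beta})\le t_j(I)$. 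The $*$-construction is itself assembled by iterated mapping cones, so your instinct pointed in the right direction; the realistic repair of your argument is to prove or cite precisely that construction (a resolution of the \emph{sum} built from resolutions of the two pieces) rather than to try to control $J\cap K$.
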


\begin{proof}
We consider the complex $\GG=\FF^{\leq \alpha}*\FF^{\leq \beta}$ defined in \cite{He}. Then $\GG$ is a multigraded free resolution of $I^{\leq \alpha}+I^{\leq \beta}$ of length $p+q$, and hence a multigraded free resolution of $I$. In particular, it follows that $\projdim S/I\leq p+q$.

By construction,
\[
\GG_{a}=\Dirsum_{i+j=a}(\FF^{\leq \alpha})_i*(\FF^{\leq \beta})_j,
\]
where each direct summand  $(\FF^{\leq \alpha})_i*(\FF^{\leq \beta})_j$ is a free multigraded $S$-module. If $f_1,\ldots, f_s$ is a multihomogeneous basis of $(\FF^{\leq \alpha})_i$ and $g_1,\ldots,g_r$  a multihomogeneous basis of $(\FF^{\leq \beta})_j$, then the symbols $f_k*g_l$ with  $k=1,\ldots, s$ and $l=1,\ldots,r$ establish a multihomogeneous basis of $(\FF^{\leq \alpha})_i*(\FF^{\leq \beta})_j$, and if $\sigma_k$ is the multidegree of $f_k$ and $\tau_l$ is the multidegree of $g_l$, then $\sigma_k\vee \tau_l$ is the multidegree of $f_k*g_l$, where for two integer vectors  $\gamma,\delta\in \NN^n$ we denote by $\gamma\vee \delta$ the integer vector which is obtained from $\gamma$ and $\delta$ by taking componentwise the maximum. It follows that the element of maximal  (total) degree in $(\FF^{\leq \alpha})_i*(\FF^{\leq \beta})_j$ has degree less than or equal to $t_i(\FF^{\leq \alpha})+t_j(\FF^{\leq \beta})$. Consequently we obtain
\begin{eqnarray*}
t_{a}(I) &=& t_{a}(\FF)\leq t_{a}(\GG)\leq \max\{t_i(\FF^{\leq \alpha})+t_j(\FF^{\leq \beta})\:\; i+j=a \;, i\leq p \;, j\leq q\}\\
&\leq &\max\{t_i(I)+t_j(I)\:\; i+j=a \;, i\leq p \;, j\leq q\}.
\end{eqnarray*}
\end{proof}

The following example illustrates that Theorem~\ref{covering}   leads to inequalities which are not implied  by Corollary~\ref{good}.

\begin{Example}{\em  Let  $S= k[x,y,z,u,v,w,a]$ and $$I = (x^2w^2v^2,a^2x^3y^2u^2w^2,a^2z^2u^2,u^2y^2z^3,x^3y^2z^2,x^5,y^5,z^5,u^5,w^5,
v^6,a^6)\subset S.$$
 We choose  $\alpha = (5,5,5,5,0,0,0)$  and  $\beta = (3,3,2,2,6,5,6) $. Then
$$I^{\le \alpha} = (x^5,y^5,z^5,u^5, x^3y^3z^2,u^2y^2z^3),\;
I^{\le \beta} = (w^5,v^6,a^6,x^2w^2v^2,a^2x^3y^2u^2w^2,a^2z^2u^2).$$
Here,  $p = 4$, $q= 5$ and $\projdim S/I=7$.  Thus by Theorem~\ref{covering}, $$t_7(I) \le \max \{t_2(I)+t_5(I), t_3(I)+t_4(I)\}.$$}
\end{Example}

\begin{Corollary}
\label{range}
Let $s=p+q-a$. Then with the notation and assumptions of Theorem~\ref{covering} we have
\[
t_a(I)\leq\max\{t_i(I)+t_{a-i}(I)\:\; p-s\leq i\leq p\}.
\]
\end{Corollary}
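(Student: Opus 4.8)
The plan is to show that this corollary is nothing but a reindexing of the inequality already established in Theorem~\ref{covering}. First I would invoke that theorem, which under the stated notation and assumptions gives
\[
t_a(I)\leq \max\{t_i(I)+t_j(I)\:\; i+j=a,\; i\leq p,\; j\leq q\}.
\]
Since every pair $(i,j)$ contributing to this maximum satisfies $j=a-i$, I would substitute $t_j(I)=t_{a-i}(I)$ throughout, thereby turning the maximum into one taken over the single running index $i$.

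It then remains to translate the two constraints $i\leq p$ and $j\leq q$ into constraints on $i$ alone. The first is already in the required form. For the second, writing $j=a-i\leq q$ yields $i\geq a-q$. The key one-line observation is that, by the definition $s=p+q-a$, one has $p-s=p-(p+q-a)=a-q$, so the lower bound $i\geq a-q$ is precisely $i\geq p-s$. Combining the two inequalities gives the range $p-s\leq i\leq p$, and hence
\[
t_a(I)\leq\max\{t_i(I)+t_{a-i}(I)\:\; p-s\leq i\leq p\},
\]
as asserted.

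I do not expect any genuine obstacle here: the statement is a purely formal consequence of Theorem~\ref{covering}, obtained by eliminating $j$ via $i+j=a$ and rewriting the constraint $j\leq q$ through the identity $a-q=p-s$. The only point requiring verification is this elementary identity relating $s$ to the endpoints of the index range.
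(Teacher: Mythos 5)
Your proof is correct and is exactly the argument the paper intends: the paper states Corollary~\ref{range} without proof as an immediate consequence of Theorem~\ref{covering}, and your reindexing via $j=a-i$ together with the identity $p-s=a-q$ is precisely that routine verification. Nothing is missing.
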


A a special case of this corollary one obtains

\begin{Corollary}
\label{general}
Let $I\subset S=K[x_1,\ldots,x_n]$ be a monomial ideal with $\dim S/I=0$ which is minimally generated by $m\leq 2n-6$ monomials, and let $a$ be an integer with $(m+4)/2\leq a\leq n$. Then for all $p=m-a+2,\ldots,a-2$,
\[
t_a(I)\leq \min\{t_1(I)+t_{a-1}(I), \max\{t_i(I)+t_{a-i}(I)\:\; p-(m-a)\leq i\leq \min\{p,a/2\}\}\}.
\]
\end{Corollary}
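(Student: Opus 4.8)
The plan is to realize the asserted estimate as an instance of Corollary~\ref{range}: for each admissible $p$ I would build a covering pair $(\alpha,\beta)$ whose two quotients have projective dimensions bounded by $p$ and by $m-p$, then reshape the resulting range by the symmetry $t_i(I)+t_{a-i}(I)=t_{a-i}(I)+t_i(I)$, and finally intersect with Corollary~\ref{monomial} to obtain the minimum.

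First I would record the structural consequences of $\dim S/I=0$. Since $\depth S/I=0$, the Auslander--Buchsbaum formula gives $\projdim S/I=n$, so the hypothesis $a\le n$ is exactly the requirement $a\le\projdim S/I$ of Theorem~\ref{covering}. Moreover $\sqrt{I}=(x_1,\dots,x_n)$ forces a pure power $x_i^{c_i}$ to be a minimal generator for each $i$; hence $m\ge n$, and the minimal generators split into these $n$ pure powers and $m-n\le n-6$ \emph{mixed} monomials.

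Next comes the construction. Given $p$ in the prescribed range, I would fix $P\subseteq\{1,\dots,n\}$ with $|P|=p$, set $\alpha=\sum_{i\in P}c_ie_i$, and let $\beta$ be the componentwise join of all generators that do not divide $\mathbf x^\alpha$. Then $I^{\le\alpha}$ is supported on the $p$ variables indexed by $P$, so $\projdim S/I^{\le\alpha}\le p$ by the (faithfully flat) base change to $K[x_i:i\in P]$. For $\beta$ the point is a count: by the restriction lemma the minimal generators of $I^{\le\beta}$ are among the generators of $I$ dividing $\mathbf x^\beta$, and no pure power $x_i^{c_i}$ with $i\in P$ divides $\mathbf x^\beta$, because every other generator has $x_i$-exponent strictly below $c_i$; hence at most $m-p$ of the $m$ generators divide $\mathbf x^\beta$, and the Taylor complex gives $\projdim S/I^{\le\beta}\le m-p$. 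Since each generator divides $\mathbf x^\alpha$ or $\mathbf x^\beta$, the pair $(\alpha,\beta)$ is a covering pair, and Corollary~\ref{range}, applied with the true projective dimensions $p_0\le p$, $q_0\le m-p$ (so $a\le n\le p_0+q_0$) and then enlarging the index set to $\{i\le p,\ j\le m-p\}$, yields
\[
t_a(I)\le\max\{t_i(I)+t_{a-i}(I)\:\; p-(m-a)\le i\le p\}.
\]

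It remains to reshape this range. As $t_i(I)+t_{a-i}(I)$ is symmetric under $i\mapsto a-i$, and interchanging $\alpha$ and $\beta$ replaces $p$ by $m-p$, I may assume $p\le m/2$; then every index $i$ with $a/2<i\le p$ reflects to $a-i\in[\,p-(m-a),\,a/2\,)$, so the maximum is unchanged when the upper limit is lowered to $\min\{p,a/2\}$. The lower bound $p\ge m-a+2$ forces $p-(m-a)\ge 2$, and the bound $p\le a-2$ keeps $a-1$ out of the range as well; thus $i=1$ and $i=a-1$ do not occur, so the maximum need not already contain $t_1(I)+t_{a-1}(I)$. Intersecting with the universally valid $t_a(I)\le t_{a-1}(I)+t_1(I)$ of Corollary~\ref{monomial} produces the stated minimum. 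The main obstacle is exactly this bookkeeping: one must verify that $m\le 2n-6$ and $(m+4)/2\le a\le n$ make the interval of admissible $p$ nonempty, keep both $p$ and $m-p$ strictly below $n$ (so that the count bound $m-p$ rather than $n$ is the binding one), and leave enough room for the symmetric reflection to land inside $[\,p-(m-a),\,a/2\,]$; the covering-pair construction itself goes through for every choice of $P$.
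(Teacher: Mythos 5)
Your proposal follows the paper's proof almost line by line: the paper likewise takes $\alpha=(a_1,\ldots,a_p,0,\ldots,0)$ built from the pure powers (which exist because $\dim S/I=0$), lets $J$ be the ideal generated by the remaining $m-p$ generators with $x^\beta$ their least common multiple, so that $J=I^{\leq\beta}$ and $(\alpha,\beta)$ is a covering pair with $\projdim S/I^{\leq\alpha}\leq p$ and $\projdim S/I^{\leq\beta}\leq m-p$, applies Corollary~\ref{range}, and intersects with the bound $t_a(I)\leq t_1(I)+t_{a-1}(I)$; the numerical hypotheses serve, as you say, only to guarantee $i\geq 2$ and $a-i\geq 2$ throughout the window and $m-a+2\leq a-2$. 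Two of your refinements are actually more careful than the paper's write-up: you verify $J=I^{\leq\beta}$ (no pure power $x_i^{c_i}$ with $i\in P$ divides $x^\beta$, since minimality forces every other generator to have $x_i$-exponent below $c_i$), and you cite Corollary~\ref{monomial} for $t_a\leq t_1+t_{a-1}$, which is the right reference here: the paper cites Corollary~\ref{good}, which only covers $a=\projdim S/I$, while $a$ may be smaller than $n$.

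The one step where your justification does not work as stated is the truncation of the upper limit from $p$ to $\min\{p,a/2\}$. Interchanging $\alpha$ and $\beta$ changes nothing, because the bound of Theorem~\ref{covering} is symmetric in the pair; and you cannot ``assume $p\leq m/2$,'' because the corollary asserts the inequality for each individual $p$, and the window $[\,p-(m-a),\min\{p,a/2\}\,]$ is not invariant under $p\mapsto m-p$, so the assertion for $p$ is not implied by the assertion for $m-p$. As you yourself observe, the reflection $i\mapsto a-i$ sends an index $i\in(a/2,p]$ back into the window only when $a-i\geq p-(m-a)$, i.e. $i\leq m-p$, which is automatic exactly when $p\leq m/2$. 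For $p>m/2$, which the hypotheses permit (take $n=10$, $m=14$, $a=10$, $p=8$: the covering-pair bound gives $\max\{t_2+t_8,\,t_3+t_7,\,t_4+t_6,\,t_5+t_5\}$, while the corollary claims $\max\{t_4+t_6,\,t_5+t_5\}$), the stated window drops terms that the argument genuinely produces. To be fair, the paper's own proof is silent on precisely this point -- it passes from Corollary~\ref{range}, whose upper limit is $p$, to the limit $\min\{p,a/2\}$ without comment -- so you have reproduced rather than created this gap; but identifying the condition ``the reflection must land inside the window'' and then dismissing it via the swap is not a repair, and for $p>m/2$ neither your argument nor the paper's establishes the inequality as stated.
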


\begin{proof}
Due to Corollary~\ref{good} we only need to show that $$t_a(I)\leq\max\{t_i(I)+t_{a-i}(I)\:\; p+a-m\leq i\leq  \min\{p,a/2\}\}\}.$$
Since $\dim S/I=0$, it follows that among the minimal set of  generators $G(I)$ of $I$ are the pure powers $x_1^{a_1},\ldots,x_n^{a_n}$ for suitable $a_i>0$. We let $\alpha=(a_1,\ldots,a_p,0,\ldots,0)$. Then $I^{\leq \alpha}$ has all its generators in $K[x_1,\ldots,x_p]$
so that $\projdim S/I=p$. Let $J$ be the ideal which is generated by the set of  monomials $G(I)\setminus \{x_1^{a_1},\ldots,x_p^{a_p}\}$, and  let $x^\beta$ be the least common multiple of the generators of $J$. Then $J=I^{\leq \beta}$ and $ (\alpha,\beta)$ is a covering pair for $I$. Since $J$ is generated by $m-p$ elements it follows that $q=\projdim S/J\leq m-p$. Hence the desired inequality follows from Corollary~\ref{range}. The conditions on the integers  $a$, $m$ and $p$ only make sure that $i\geq 2$ and $a-i\geq 2$ for all $i$ with  $p+a-m\leq i\leq p$,  and that $m-a+2\leq a-2$.
\end{proof}

The bound in Corollary \ref{general} is a partial  improvement of the results in \cite{EHU} and
\cite{McC}  since  the bound is also valid for certain  $a< n$.  For $a=n$, it is weaker than the one in \cite{EHU} for zero dimensional rings and is stronger than the one in \cite{McC}.  For example, if $n=7$ and $m=8$ one has  $t_6\le t_1+t_2+t_3$, and if $6\le n\le 20$ and $m\le 2n-6$, then one has  $t_7\le t_1+t_2+t_4$.

\begin{Remark}
 \label{multiple}
 {\em With the same  methods as applied in the proof of Theorem~\ref{covering} one can show the following statement: let $I\subset S$ be a monomial ideal with graded minimal free resolution $\FF$, and   $f_i\in F_{a_i}$ multihomogeneous basis elements  of multidegree $\alpha_i$ for $i=1,\ldots,r$.  Assume that $I=\sum_{i=1}^rI^{\leq \alpha_i}$. Then
\[
t_{a_1+a_2+\cdots +a_r}(I)\leq t_{a_1}(I)+t_{a_2}(I)+\cdots +t_{a_r}(I).
\]

}
\end{Remark}

To satisfy the condition $I=\sum_{i=1}^rI^{\leq \alpha_i}$ requires in general that either $r$ is big enough or that the $\alpha_i$ are large enough (with respect to the partial order given by componentwise comparison). Here is an example with $r=2$ to which Remark~\ref{multiple} applies: let $$I =(x^2w^2v^2,a^2x^3y^2u^2w^2,a^2z^2u^2,u^2y^2z^3,x^3y^2z^2)\subset  k[x,y,z,w,u,v,a]$$
The Betti numbers of $R/I$ are $1,5,8,5,1$. Even though the Betti sequence is symmmetric, the   ideal $I$ is not Gorenstein, since it is of height $2$ and projective dimension $4$.
The two multidegrees in $F_2$ which form a covering pair for $I$ are $(3,2,2,2,2,0, 2)$ and $(2,2,3,2,2,2,0)$.  In this example we have
$t_1 = 11, t_2 = 13, t_3 = 15, t_4= 16$ and we clearly have $t_i \le t_2+t_2$ .

\newpage

\end{document}